\DeclareMathOperator{\id}{id}
\numberwithin{equation}{section}
\newcommand{\tens}{\otimes}
\newcommand{\iso}{\simeq}
\newcommand{\defeq}{\mathrel{\mathop:}=}
\newcommand {\inj}{\hookrightarrow}
\DeclareMathOperator{\Spec}{Spec}
\newcommand {\supp}[1]{{\rm Supp}(#1)}
\newcommand {\inv}{^{-1}}
\newcommand {\integ}{\mathbb Z}
\newcommand {\C}{\mathbb C}
\DeclareMathOperator{\DD}{D}
\DeclareMathOperator{\bb}{b}
\DeclareMathOperator{\coh}{coh}
\newcommand{\Dbcoh}{\DD^{\bb}_{\coh}}
\DeclareMathOperator{\perf}{perf}
\newcommand{\DperfAbs}{\DD_{\perf}}
\DeclareMathOperator{\Image}{Im}
\newcommand{\OCal}{\mathcal{O}}
\DeclareMathOperator{\can}{can}
\theoremstyle{plain}
\newtheorem{Th}{Theorem}[section]
\newtheorem{Lemma}[Th]{Lemma}
\newtheorem{Prop}[Th]{Proposition}
\newtheorem{Prop/Def}[Th]{Proposition/Definition}
 \theoremstyle{definition}
\DeclareMathAlphabet{\mathantt}{OT1}{antt}{li}{it}
\numberwithin{equation}{section}
\theoremstyle{plain}
 \theoremstyle{definition}
\DeclareMathOperator{\FMP}{FMP}
\begin{document}

\title[countability of relative Fourier-Mukai Partners]{countability of relative Fourier-Mukai Partners}

\author[R. Kurama]{Riku Kurama}

\address{Department of Mathematics, University of Michigan, Ann Arbor, MI 48109} 

\email{rkurama@umich.edu}
\maketitle
\begin{abstract}
Anel and To\"en proved that a smooth projective complex variety has only countably many smooth projective Fourier-Mukai partners up to isomorphism. This is generalized in the Stacks Project to the case where the varieties are smooth proper over an arbitrary algebraically closed field. 
This note will upgrade the proof of the latter reference to show that a smooth proper scheme over 
    a noetherian base has only countably many relative Fourier-Mukai partners up to isomorphism. 
\end{abstract}
\section{Introduction}
In \cite{Kawa}, Kawamata conjectured that a smooth projective variety over $\C$ has only finitely many smooth projective Fourier-Mukai partners up to isomorphism and verified the conjecture for complex surfaces. The conjecture was also confirmed in other cases like abelian varieties, but Lesieutre found a 3-dimensional counterexample in \cite{Lesi}, so the conjecture is now known to be false in general. 
On the other hand, Anel and To\"en proved that there are only countably many Fourier-Mukai partners in \cite{Toen}. Here is a slight strengthening of this result in \cite[\href{https://stacks.math.columbia.edu/tag/0G11}{Tag 0G11}]{stacks-project}:
\begin{Th}[{\cite{Toen}},{\cite[\href{https://stacks.math.columbia.edu/tag/0G11}{Tag 0G11}]{stacks-project}}]\label{Toen}    
Let $X$ be a smooth proper $k$-scheme, where $k$ is an algebraically closed field field. Then, $X$ has at most countably many smooth proper Fourier-Mukai partners up to isomorphism. 
\end{Th}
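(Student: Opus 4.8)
The plan is to reconstruct each Fourier-Mukai partner $Y$ from intrinsic data of the single triangulated category $T = \Dbcoh(X)$ and then to bound the possibilities by a countable invariant. First I would invoke the representability theorem for Fourier-Mukai functors (Orlov, in the form available for smooth proper schemes): a smooth proper $k$-scheme $Y$ is a Fourier-Mukai partner of $X$ precisely when there is a $k$-linear exact equivalence $\Dbcoh(Y) \xrightarrow{\sim} \Dbcoh(X)$, and every such equivalence is a Fourier-Mukai transform with a perfect kernel on $X \times_k Y$. The goal is therefore to show that the set of isomorphism classes of smooth proper $Y$ admitting such an equivalence is countable.

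The key geometric input is the moduli space of objects. For smooth proper $Y$ the skyscrapers $\OCal_y$ at closed points $y$ are \emph{point objects}: they are simple ($\Hom(\OCal_y,\OCal_y) = k$), satisfy $\Hom(\OCal_y, \OCal_y[i]) = 0$ for $i < 0$, and are fixed by the Serre functor up to shift. An equivalence $\Phi \colon \Dbcoh(Y) \to \Dbcoh(X)$ carries the universal family $\OCal_{\Delta_Y}$ to a $Y$-family of point objects on $X$, i.e. to a perfect kernel $K$ on $X \times_k Y$, and hence to a classifying morphism $\phi_Y \colon Y \to M$, where $M$ denotes the moduli space of simple (universally gluable) perfect complexes on $X$. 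By the work of Lieblich and Inaba, $M$ is an algebraic space, locally of finite type over $k$, and it depends only on $X$. Since $\Ext^1(\OCal_y, \OCal_y) = T_y Y$ and the deformations of $\OCal_y$ realized by $Y$ itself are unobstructed, the map $\phi_Y$ is \'etale; as $Y$ is proper and $M$ is separated, $\phi_Y$ is an open immersion onto a quasi-compact clopen subspace. Thus \emph{every} Fourier-Mukai partner is isomorphic to a quasi-compact clopen subspace of the single space $M$.

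It then suffices to prove that $M$ has at most countably many connected components: $M$ is locally Noetherian, hence locally connected, so its components are open, and a quasi-compact clopen subspace is a finite union of them; the finite subsets of a countable set again form a countable set, and every partner is realized by such a subspace. To bound $\pi_0(M)$ I would stratify by the numerical class: the Chern character defines a locally constant map $M \to \overline{K}_0(X)$ to the numerical Grothendieck group, which for a smooth proper $X$ is a finitely generated abelian group, in particular countable. Within a fixed numerical class the point objects — and more generally the simple complexes of bounded amplitude — should form a bounded family, so each stratum is quasi-compact and contributes only finitely many components.

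The main obstacle is exactly this last countability step: one must establish that the relevant simple objects of a fixed numerical class constitute a bounded family, and that $\overline{K}_0(X)$ is finitely generated. Both use the smooth-properness of $X$ in an essential way — the finiteness of the Hochschild-type invariants and the finite generation of the groups of cycles modulo numerical equivalence — and it is precisely this finiteness that must be re-established, in a suitably relative form, in order to pass to a noetherian base. A secondary technical point is the verification that $\phi_Y$ is a clopen immersion, which rests on the deformation theory of point objects and the comparison of their tangent and obstruction spaces with those of $Y$.
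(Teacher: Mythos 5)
Your proposal takes a genuinely different route from the paper. The paper (following the Stacks Project, whose argument it generalizes) never forms a global moduli space: it spreads out any partner together with its kernel to a family over a finite type $\integ$-scheme, notes that such models form a countable set up to isomorphism, and then shows each model contributes only finitely many isomorphism classes of fibers, via the key lemma that a kernel restricting to $\OCal_{\Delta_{Y_s}}$ over a point trivializes the family on an open neighborhood, plus a disjoint-opens/quasi-compactness argument. Your route is instead the Anel--To\"en-style one: embed every partner as an open subspace of the moduli space $M$ of simple universally gluable perfect complexes on $X$ and count components. Your first half is essentially sound and standard: skyscrapers map to point objects, $\phi_Y\colon Y\to M$ is \'etale (deformations of $\OCal_y$ are unobstructed, with tangent space $\Ext^1(\OCal_y,\OCal_y)=T_yY$) and universally injective, hence an open immersion.

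The gaps are concentrated exactly where the countability must be produced, and they are genuine. First, you assert that $M$ is separated in order to conclude that the image of $\phi_Y$ is closed, hence clopen, hence a finite union of connected components. But moduli of simple sheaves and complexes are classically \emph{non}-separated (this is the very reason stability conditions exist), and properness of $Y$ does not force its image to be closed in a non-separated ambient space (think of one copy of $\projspace{1}{k}$ inside $\projspace{1}{k}$ with a doubled point: open, proper over $k$, not closed). Without clopen-ness, the reduction of the count to $\pi_0(M)$ collapses, and openness alone proves nothing: a locally noetherian space over $\C$ can have uncountably many quasi-compact opens (already $\affinespace{1}{\C}$ minus finite sets of points). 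Second, even granting clopen-ness, your bound on $\pi_0(M)$ rests on the claim that simple point-like complexes of a fixed numerical class form a bounded family (plus finite generation of the numerical $K$-group of a smooth \emph{proper}, possibly non-projective, $X$); boundedness without a stability hypothesis is precisely the sort of statement that fails or requires heavy input, and you yourself flag it as ``the main obstacle.'' Since $k$ may be uncountable, countability has to be injected somewhere: the paper injects it through the elementary countability of data over finite type $\integ$-schemes, while your proof leaves it resting on an unproved and doubtful boundedness claim, so it is incomplete at its crux.
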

The motivation for this note came from the author's previous work \cite{Kurama}.
Let us quickly recall the relevant results.
Two smooth proper schemes $X,Y$ over a noetherian base $S$ are called \textbf{$S$-relatively Fourier-Mukai equivalent} if there exists a kernel $E\in \DperfAbs (X\times_SY)$ which defines for any $s\in S$ a derived equivalence
\[\Phi_{E_s}(-)\defeq R{q_s}_*(L{p_s}^*(-)\tens^L E_s):\Dbcoh(X_s)\longrightarrow\Dbcoh(Y_s),\]
 where $E_s\defeq E|_{X_s\times_{\{s\}}Y_s}$, $p_s:X_s\times_{\{s\}}Y_s\longrightarrow X_s,$ and $ q_s:X_s\times_{\{s\}}Y_s\longrightarrow Y_s$
(see \cite[\S2]{Kurama} for more details).
Let $k$ be an algebraically closed field of characteristic $p>0$, and let $X$ be an ordinary abelian variety or an ordinary K3 surface over $k$ (in the latter case, we assume $p>2$). For a smooth projective morphism $X\longrightarrow S$ of noetherian schemes, let $\FMP(X/S)$ denote the set of isomorphism classes of smooth projective $S$-schemes which are $S$-relatively Fourier-Mukai equivalent to $X$. 
Then, we have:
\begin{Th}[{{\cite[Theorem 1.5]{Kurama}}}]
Let $X_{\can}$ be the canonical lift of $X$ over the ring of Witt vectors $W(k)$. Then, the restriction to the special fiber defines a bijection
\[
\FMP(X_{\can}/W(k))\longrightarrow \FMP(X/k).
\] 
In particular, $\FMP(X_{\can}/W(k))$ is finite. 
\end{Th}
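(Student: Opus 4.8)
The plan is to prove that the restriction map $[Y]\mapsto[Y_k]$ is a well-defined bijection $\FMP(X_{\can}/W(k))\to\FMP(X/k)$, and then to deduce finiteness by passing to the characteristic-zero generic fiber. Well-definedness is essentially built into the definitions: if $Y$ is a smooth projective $W(k)$-scheme that is $W(k)$-relatively Fourier--Mukai equivalent to $X_{\can}$ via a relative kernel $E\in\DperfAbs(X_{\can}\times_{W(k)}Y)$, then by the defining property of relative equivalence the restriction $E_k=E|_{X\times_k Y_k}$ induces an equivalence $\Dbcoh(X)\to\Dbcoh(Y_k)$, so $[Y_k]\in\FMP(X/k)$ and the assignment is well-defined.

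The crucial structural input is that Fourier--Mukai equivalence preserves ordinarity: any partner $Z$ of the ordinary abelian variety or ordinary K3 surface $X$ is again ordinary of the same type. For abelian varieties this follows from Orlov's description of derived equivalences as isometries of $Z\times\hat Z$ preserving the canonical symplectic form; for K3 surfaces it follows because the equivalence induces an isometry of the crystalline Mukai lattice, which preserves the Hodge and Newton polygons. Consequently every $Z\in\FMP(X/k)$ admits a canonical lift $Z_{\can}$ over $W(k)$, and the canonical lift is \emph{functorial and unique} among ordinary lifts. This rigidity yields injectivity: if $Y,Y'$ are relative partners with $Y_k\cong Y'_k$, then both are ordinary lifts of the same special fiber, and the relative kernel pins down the deformation data of the total space (the Serre--Tate coordinates in the abelian case, the $F$-crystal structure in the K3 case) to be the trivial/canonical one shared with $X_{\can}$. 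Hence $Y\cong Y'\cong (Y_k)_{\can}$.

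For surjectivity, given $Z\in\FMP(X/k)$ with kernel $F$ realizing $\Dbcoh(X)\to\Dbcoh(Z)$, I would take the canonical lift $Z_{\can}$ and lift $F$ to a relative kernel $\widetilde F\in\DperfAbs(X_{\can}\times_{W(k)}Z_{\can})$. The point is that the cohomological realization of $F$ (its Mukai, resp.\ crystalline, class) is horizontal for the Gauss--Manin / crystalline connection precisely because the canonical lift is the lift trivializing the Frobenius action on the relevant cohomology; this forces the obstruction to deforming $F$ over each Artinian thickening $W(k)/p^{n}$ to vanish. One then assembles these deformations into a formal object and algebraizes by Grothendieck existence, obtaining $\widetilde F$ over $W(k)$, whose fiberwise restrictions are equivalences, so $Z_{\can}$ is a relative partner of $X_{\can}$ restricting to $Z$. \textbf{The main obstacle is exactly this lifting step:} verifying that the kernel deforms compatibly across all thickenings. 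This is where ordinarity and the canonical choice of lift are indispensable, since they guarantee vanishing of the obstruction classes in the deformation (Hochschild) cohomology controlling kernels; I would handle the abelian case via Serre--Tate theory and the K3 case via the Nygaard--Ogus canonical lift and its crystalline characterization, treating them separately.

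Finally, for finiteness, a relative partner $Y$ over $W(k)$ has generic fiber $Y_\eta$ over $K=\Frac(W(k))$, which is a Fourier--Mukai partner of the characteristic-zero variety $(X_{\can})_\eta$; moreover $Y$, being the smooth proper canonical lift $(Y_k)_{\can}$, is determined up to isomorphism by $Y_\eta$. Since Fourier--Mukai partners of a K3 surface or abelian variety over a field of characteristic zero are finite in number (Bridgeland--Maciocia in the K3 case, and the classical picture for abelian varieties), as one checks after base change to $\overline{K}$ or to $\C$, it follows that $\FMP(X_{\can}/W(k))$ is finite, and by the bijection so is $\FMP(X/k)$.
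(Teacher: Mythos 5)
This statement is not actually proved in the note you were given: it is quoted from the author's earlier paper \cite{Kurama} (Theorem 1.5 there) purely as motivation, and the note's own contribution is the countability result, Theorem \ref{mainTh}. So there is no in-paper proof to compare against, and your proposal has to stand on its own. It does not, yet: the mathematical core of the theorem is exactly the two steps you leave as sketches. For injectivity you assert that the relative kernel ``pins down the deformation data (the Serre--Tate coordinates in the abelian case, the $F$-crystal structure in the K3 case) to be the trivial/canonical one,'' and for surjectivity you assert that horizontality of the Mukai class of $F$ ``forces the obstruction to deforming $F$ over each Artinian thickening to vanish.'' These sentences name the expected mechanism but contain no argument. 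A real proof must set up an obstruction theory for deforming a perfect complex on $X \times_k Z$ through the thickenings $X_{\can} \times_{W_n(k)} (Z_{\can})_{W_n(k)}$, identify the obstruction class in terms of the Kodaira--Spencer/deformation classes of the two sides, prove it vanishes precisely when both lifts are canonical (this is where ordinarity, Serre--Tate theory, and the Nygaard--Ogus characterization enter, and it is a genuine computation, not a formality), then algebraize the resulting formal complex by Grothendieck existence and check that the algebraized kernel still induces equivalences fiberwise. You flag this yourself as ``the main obstacle,'' which is an accurate self-assessment: the proposal is an outline of a strategy, not a proof.

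The finiteness deduction also has a concrete gap. Bridgeland--Maciocia (for K3 surfaces) and Orlov's classification (for abelian varieties) give finiteness of Fourier--Mukai partners over $\C$ or over an algebraically closed field of characteristic zero, whereas the generic fiber $Y_\eta$ lives over $K = \Frac(W(k))$, which is not algebraically closed. Your phrase ``as one checks after base change to $\overline{K}$ or to $\C$'' hides the real issue: a single $\overline{K}$-isomorphism class can a priori contain infinitely many pairwise non-isomorphic $K$-forms, so finiteness over $\overline{K}$ does not formally descend. What one actually needs is injectivity of $Y \mapsto Y_{\bar\eta}$ on isomorphism classes of relative partners, and this requires a uniqueness-of-specialization input: if $Y_{\bar\eta} \iso Y'_{\bar\eta}$, spread the isomorphism over the ring of integers of a finite extension of $K$ and invoke Matsusaka--Mumford (K3 case, as K3 surfaces are non-ruled) or the N\'eron property (abelian case) to conclude $Y_k \iso Y'_k$, whence $Y \iso Y'$ because both are canonical lifts. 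Relatedly, your claim that $Y$ ``is determined up to isomorphism by $Y_\eta$'' is backwards as stated: the canonical lift is determined by its \emph{special} fiber, and recovering the special fiber from the generic one is exactly the Matsusaka--Mumford/N\'eron step you omitted.
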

The author then asked in \cite{Kurama} whether the map $
\FMP(X_{\can}/W(k))\longrightarrow \FMP(X/k)$
is bijective for other smooth projective varieties $X$ over $k$ which admit canonical lifts over $W(k).$ If this is the case, Theorem \ref{Toen} would imply that $\FMP(X_{\can}/W(k))$ is countable.
It is then natural to ask if one can prove the countability of relative Fourier-Mukai partners in some generality.
The main result of this note answers this question for noetherian base schemes:
\begin{Th}\label{mainTh}
    Let $B$ be a noetherian scheme, and let $X$ be a smooth proper $B$-scheme. Then, there are only countably many smooth proper $B$-schemes which are $B$-relative Fourier-Mukai partners of $X$ up to isomorphism.
\end{Th}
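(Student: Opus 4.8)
The plan is to realize every relative Fourier--Mukai partner $Y$ of $X$ as an open and closed subspace of a single moduli space attached to $X/B$, and then to count the connected components of that moduli space. Let $M = M^{\mathrm{spl}}_{X/B}$ denote the (rigidified) moduli of simple, relatively perfect complexes on $X/B$: the algebraic space parametrizing, for a $B$-scheme $T$, the objects $F \in \DperfAbs(X_T)$ that are relatively perfect over $T$ and fiberwise simple. By the results of Lieblich and Inaba on moduli of complexes on a proper morphism, $M$ is an algebraic space, separated and locally of finite presentation over $B$; note that properness rather than projectivity of $X/B$ is exactly the generality these moduli results handle. The key geometric input, underlying Theorem \ref{Toen}, is that skyscraper sheaves are simple and that their images under a derived equivalence are again simple; this is what lets a partner be read off from $M$.

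So suppose $Y$ is a smooth proper $B$-scheme that is a $B$-relative Fourier--Mukai partner of $X$, via a kernel $E \in \DperfAbs(X \times_B Y)$ inducing fiberwise equivalences. Because $X, Y$ are smooth and proper over $B$ and $E$ is perfect, the adjoint kernel $F$ (a suitable twist of $E^\vee$) is again perfect and relatively perfect over $Y$; its fibers $F_y$ are the images $\Phi_{E_s}^{-1}(\kappa(y))$ of the skyscrapers, hence simple objects of $\Dbcoh(X_s)$. Thus $F$ defines a classifying $B$-morphism $\phi_Y \colon Y \to M$, $y \mapsto F_y$. The first main step is to show that $\phi_Y$ is an isomorphism onto an open and closed subspace of $M$. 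Fiberwise this is exactly the reconstruction of a variety from the point objects of a derived equivalence (Bridgeland--Maciocia), which gives that $\phi_{Y,s}$ is an open immersion onto a union of connected components of the fiber $M_s$; this globalizes, using that $\phi_Y$ is a monomorphism of finite presentation, to show that $\phi_Y$ is an open immersion, and since $Y \to B$ is proper while $M \to B$ is separated, $\phi_Y$ is proper, so its image is also closed. Hence $Y \iso M^{(Y)}$ for an open and closed subspace $M^{(Y)} \subseteq M$.

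An open and closed subspace of $M$ is the same datum as a union of connected components of $M$, i.e. a subset of $\pi_0(M)$, and as a subspace it is determined by that subset even in the presence of nilpotents in $B$; since $Y$ is Noetherian it has finitely many connected components, so $M^{(Y)}$ is a finite union of components of $M$, each of which is proper over $B$. Because $Y \iso M^{(Y)}$, the isomorphism class of $Y$ is determined by this finite subset of $\pi_0(M)$, so the assignment $Y \mapsto M^{(Y)}$ injects the isomorphism classes of partners into the finite subsets of $\mathcal{P}$, where $\mathcal{P}$ is the set of connected components of $M$ that are proper over $B$. It therefore suffices to prove that $\mathcal{P}$ is countable.

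For this I would induct on the Noetherian scheme $B$. Connected components are insensitive to nilpotents and a Noetherian scheme has finitely many connected components, so one reduces to $B$ integral with generic point $\eta$. A component $C \in \mathcal{P}$ with nonempty generic fiber yields an open, closed, proper subspace $C_\eta \subseteq M_\eta = M^{\mathrm{spl}}_{X_\eta/\eta}$; after base change to $\overline{\kappa(\eta)}$ the proof of Theorem \ref{Toen} shows there are only countably many such, so there are countably many possibilities for $C_\eta$, and by spreading out such a $C$ is determined over some dense open $U \subseteq B$ by $C_\eta$. The components with empty generic fiber are proper over $B$ with image in a proper closed $Z \subsetneq B$, and together with the discrepancies over $B \setminus U$ they are governed by the same moduli over the smaller Noetherian scheme $B \setminus U$, to which the inductive hypothesis applies. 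Summing the countably many contributions over the finitely many inductive stages shows $\mathcal{P}$ is countable, completing the proof. The two points requiring the most care are the relative reconstruction statement that $\phi_Y$ is an open and closed immersion (relativizing the fiberwise theory and invoking separatedness of $M$), and the spreading-out and boundedness that make the Noetherian induction uniform in the fiber; the latter is precisely where the passage from the absolute result of Theorem \ref{Toen} to a genuinely relative statement is used, and is the main obstacle.
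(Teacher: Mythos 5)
Your strategy is a relativization of Anel--To\"en's original moduli-theoretic proof (partners as open-and-closed subspaces of the moduli of simple relatively perfect complexes), which is genuinely different from the paper's argument (the paper follows the Stacks Project route: spreading out to models over finite type $\integ$-schemes, countability of such models, an isomorphism-locus lemma, and a ``local triviality near a section'' lemma). Unfortunately, as written the proposal has gaps at exactly the points where the real content lies. First, the assertion that $M^{\mathrm{spl}}_{X/B}$ is \emph{separated} over $B$ is false: the moduli of simple sheaves/complexes is classically non-separated (Mukai's examples on K3 surfaces already exhibit this), and Lieblich's and Inaba's theorems give an algebraic space locally of finite presentation but not separatedness. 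Your argument that the image of $\phi_Y$ is closed (``$Y\to B$ proper, $M\to B$ separated, hence $\phi_Y$ proper'') therefore breaks down; indeed for a non-separated target a proper open subspace need not be closed (think of $\projspace{1}{k}$ sitting inside $\projspace{1}{k}$ with a doubled point). The fiberwise open-and-closedness of the skyscraper component can be salvaged by the Bridgeland-type specialization argument, but that is a substantive argument you would need to supply and then globalize over $B$, not a formal consequence of separatedness.

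Second, and more seriously, the key countability step is asserted rather than proved. You reduce everything to: ``the set $\mathcal{P}$ of proper connected components of $M$ is countable,'' and over the generic point you claim ``the proof of Theorem \ref{Toen} shows there are only countably many'' proper open-and-closed subspaces of $M_{\overline{\kappa(\eta)}}$. But the cited proof (the Stacks Project version, which is what the paper generalizes) never mentions the moduli space $M$ and establishes countability of Fourier--Mukai partners by arithmetic means (models over finite type $\integ$-schemes); it does not establish countability of proper components of $M$. Nor can any such statement follow from finite-type considerations alone: a space locally of finite type over an uncountable field such as $\C$ can perfectly well have uncountably many proper connected components (e.g.\ an uncountable disjoint union of points), so countability of $\mathcal{P}$ needs its own arithmetic input --- which is precisely the content the paper supplies via \cite[\href{https://stacks.math.columbia.edu/tag/0G0U}{Tag 0G0U}]{stacks-project} and the model-theoretic spreading out. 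Finally, the Noetherian induction used to handle components with empty generic fiber does not get off the ground: each such component maps into \emph{some} proper closed subset $Z\subsetneq B$, and the dense open $U$ in your spreading-out step depends on the component, so there is no single smaller Noetherian scheme $B\setminus U$ to which the inductive hypothesis can be applied; a union of countable sets indexed by the (possibly uncountable) family of proper closed subsets of $B$ need not be countable. In short, the skeleton of the reduction to moduli is reasonable, but the two steps you yourself flag as delicate --- closedness of the image and the uniform countability/boundedness --- are exactly where the proposal fails, and filling them in would require importing the same countability-of-models machinery that constitutes the paper's actual proof.
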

The idea of its proof is rather simple; namely we adapt the proof of Proposition \ref{Toen} to the situation where we work relative to a noetherian base. 
Our proof is very similar to the one in \cite{stacks-project}, and there are only two main steps in the generalization.
The first one is the approximation of the base scheme by
finite type $\integ$-schemes and the use of ``models'' over such schemes, which is recalled in \S \ref{secAppMod}.
The second one is the adaptation of \cite[\href{https://stacks.math.columbia.edu/tag/0G0R}{Tag 0G0R}]{stacks-project} to our situation, which is done in Lemma 
\ref{keyLemma}.
\subsection*{Acknowledgement}
The author would like to thank his advisor Alexander Perry for continual support, helpful  discussions and suggestions. 
\section{Approximation and Models}\label{secAppMod}
This section is a quick reminder about approximation and models.
Let $B$ be a noetherian base scheme. 
Let 
\[\{X^a\}_{a\in A}, \{f^b:X^{a_{b}^1}\longrightarrow X^{a_{b}^2}\}_{b\in B}
\]
be respectively a collection of finite type $B$-schemes and a collection of $B$-linear morphisms between them.
In the resulting diagram, we may also demand a fixed collection of commutativity conditions. We also record the subsets $S\subset B$ (resp.\ $P\subset B$) which are the indices of the morphisms $f^b$ that are assumed to be smooth (resp.\ proper) (we allow some other morphisms to be also smooth or proper).
In this section, we will remind the reader that we can find a ``model'' of this diagram with desired properties over a finite type $\integ$-scheme instead of the original base scheme $B$. 

\begin{itemize}
    \item[$\bullet$ Schemes:]
First, by the same argument as noetherian approximation, we write $B$ as the inverse limit of finite type $\integ$-schemes $\{B_i\}_{i\in I}$, where $I$ is a directed set, and the transition maps of the system are affine (\cite[\href{https://stacks.math.columbia.edu/tag/01ZA}{Tag 01ZA}]{stacks-project}). 
Pick one index $a\in A$. By (1) of \cite[\href{https://stacks.math.columbia.edu/tag/01ZM}{Tag 01ZM}]{stacks-project}, we can find an index $i_0\in I$ and a finite type $B_{i_0}$-scheme $X^a_{i_0}$ which is a \textbf{model of $X^a$ over $B_{i_0}$} in the sense that $X_{i_0}^a\times_{B_{i_0}}B\iso X^a$. We replace the indexing set $I$ by $\{i\in I\;|\;i\geq i_0\}$ and define $X_i^a\defeq X_{i_0}^a\times_{B_{i_0}}B_i$ for each $i,$ so that we have an inverse system $\{X^a_{i}\}_{i\in I}$ of models of $X^a$.
We repeat the same procedure for other indices $a\in A$ as well.
\item[$\bullet$ Morphisms:]
Choose an index $b\in B$. By (2) of \cite[\href{https://stacks.math.columbia.edu/tag/01ZM}{Tag 01ZM}]{stacks-project}, we find an index $i_0\in I$ and a morphism $f^b_{i_0}:X^{a^1_b}_{i_0}\longrightarrow X^{a^2_b}_{i_0}
 $ which is a model of $f^b$ in the sense that its base change along $B\longrightarrow B_{i_0}$ is $f^b$. We replace $I$ by $\{i\in I\;|\;i\geq i_0\}$ and set $f_i^b:X_i^{a^1_b}\longrightarrow X_i^{a^2_b}$ as the base change of $f^b_{i_0}$ along $B_i\longrightarrow B_{i_0}$ for each $i$.
If $b\in S$, $f^b$ is smooth, so there is some index $i_0\in I$ for which $f_{i_0}^b$ is smooth by \cite[\href{https://stacks.math.columbia.edu/tag/081D}{Tag 081D}]{stacks-project}. We replace $I$ by $\{i\in I\;|\;i\geq i_0\}$ again. Note that $\{f^b_i\}_i$ now consists of smooth morphisms regardless of $i$.
If $b\in P$, we repeat the same procedure with respect to properness using \cite[\href{https://stacks.math.columbia.edu/tag/081F}{Tag 081F}]{stacks-project}.
 Finally, we repeat the same procedures for other indices $b\in B$ as well.
 \item[$\bullet$ Commutativity:] For each commutativity condition imposed on the original diagram, we can find an index $i_0\in I$ for which the corresponding commutativity condition holds for models over $B_{i_0}$ by (3) of \cite[\href{https://stacks.math.columbia.edu/tag/01ZM}{Tag 01ZM}]{stacks-project}. The same commutativity condition holds for all indices $i\geq i_0$ as such diagrams arise by base change. We replace $I$ by $\{i\in I\;|\; i\geq i_0\}$. We run this procedure for every commutativity condition of the original diagram.
\end{itemize}
After these steps, if we choose any index $i\in I$, we get a model \[\{X^a_i\}_a,\{f_i^b:X_i^{a^1_{a_b}}\longrightarrow X_i^{a^2_{a_b}}\}_b\] of the original diagram over the finite type $\integ$-scheme $B_i$. Moreover, such a model retains the same kind of commutativity, smoothness and properness.
\section{Proof of the theorem}
Let $B$ be a noetherian scheme. 
\begin{Lemma}[see {\cite[\href{https://stacks.math.columbia.edu/tag/0G0X}{Tag 0G0X}]{stacks-project}}]\label{LemIsom}
Let $S$ be a finite type $B$-scheme, and let $X,Y$ be finite type $S$-schemes. There exists a countable family of finite type $S$-schemes $\{S_i
 \}_{i\in I}$ such that 
 \begin{enumerate}
     \item $X_{S_i}\iso Y_{S_i}$ for each $i.$
     \item given any $B$-point $s:B\longrightarrow S$ such that $X_s\iso Y_s$ (where $X_s\defeq X\times_{S,s}B$ and similarly for $Y_s$), the map $s:B\longrightarrow S$ factors through some $S_i$.
 \end{enumerate}
\end{Lemma}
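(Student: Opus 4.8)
The plan is to interpret the statement through the isomorphism functor and reduce it to a representability-plus-countability statement. Write $I \defeq \operatorname{Isom}_S(X,Y)$ for the functor on $S$-schemes sending an $S$-scheme $T$ to the set of $T$-isomorphisms $X_T \xrightarrow{\sim} Y_T$. The key observation is that the data in condition (2) is exactly a $B$-valued point of $I$: a $B$-point $s:B\to S$ together with an isomorphism $X_s\iso Y_s$ is the same as a morphism $b:B\to I$ whose composite with the structure map $I\to S$ is $s$. Likewise, producing a finite type $S$-scheme $S_i$ with $X_{S_i}\iso Y_{S_i}$ as in (1) amounts to producing a finite type $S$-scheme equipped with a morphism $S_i\to I$ over $S$. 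Thus it suffices to exhibit a countable family of finite type $S$-schemes $\{S_i\}$, each carrying a morphism $S_i\to I$ over $S$, such that every $B$-point $b:B\to I$ factors through some $S_i$.

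To build such a family I would represent $I$ by a scheme that is a countable disjoint union of finite type $S$-schemes. An isomorphism $X_T\xrightarrow{\sim} Y_T$ is determined by its graph, a closed subscheme of $X_T\times_T Y_T=(X\times_S Y)_T$ projecting isomorphically to each factor. Choosing compactifications $X\hookrightarrow\overline X$ and $Y\hookrightarrow\overline Y$ over $S$, the closure of such a graph in $\overline X\times_S\overline Y$ is proper over $S$ and defines a point of the Hilbert scheme $\operatorname{Hilb}_{\overline X\times_S\overline Y/S}$. This Hilbert scheme decomposes, according to a discrete invariant such as the Hilbert polynomial $P$, into a genuine disjoint union $\bigsqcup_P \operatorname{Hilb}^P$ of finite type $S$-schemes over the countable indexing set of polynomials $P$. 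The locus consisting of those subschemes that arise as closures of graphs of isomorphisms of the open parts is cut out by the conditions that the restriction to $X\times_S Y$ project isomorphically onto $X$ and onto $Y$; these define a locally closed subscheme. Consequently $I$ is represented by a countable disjoint union $\bigsqcup_P I_P$ of finite type $S$-schemes, and the universal subscheme restricts over each $I_P$ to the graph of a universal isomorphism $X_{I_P}\iso Y_{I_P}$, which gives condition (1) for each $I_P$.

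It remains to pass from this disjoint union to a countable family through which all $B$-points factor. Here I would use that $B$ is noetherian, hence quasi-compact: for any $B$-point $b:B\to I$, the image $b(B)$ is quasi-compact and therefore meets only finitely many pieces, say $I_{P_1},\dots,I_{P_n}$, so that $b$ factors through the finite type $S$-scheme $I_{P_1}\sqcup\cdots\sqcup I_{P_n}$. Letting $\{S_i\}$ range over all finite disjoint unions $I_{P_1}\sqcup\cdots\sqcup I_{P_n}$ — a countable collection, as there are countably many $P$ — yields the desired family: condition (1) holds because each piece carries the universal isomorphism, and condition (2) holds because every $B$-point factors through one of these finite unions, the factorization being compatible with the maps to $S$ by construction.

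The main obstacle is the representability of $I=\operatorname{Isom}_S(X,Y)$ as a countable disjoint union of finite type $S$-schemes \emph{without} assuming $X,Y$ proper over $S$: the graphs of isomorphisms need not be proper, so one must first pass to (separated, suitably projective) compactifications and then verify that the conditions singling out genuine isomorphism-graphs inside the Hilbert scheme of the compactification define a locally closed subscheme over which a universal isomorphism exists; some care is also needed to ensure the Hilbert functor is representable and decomposes by a discrete invariant, for which one may work with the Hilbert algebraic space if no relatively ample bundle is at hand. In the intended application $X,Y$ are smooth and proper, which removes the compactification step. The final passage from a disjoint union to a countable family of finite type schemes is comparatively routine, resting only on the quasi-compactness of the noetherian base $B$.
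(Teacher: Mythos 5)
Your reduction to ``represent $\operatorname{Isom}_S(X,Y)$ by a countable disjoint union of finite type $S$-schemes, then use quasi-compactness of $B$'' is a reasonable-sounding plan, but the way you propose to realize it has a genuine gap, and it is exactly the gap you flag at the end without resolving. The countability in your argument comes from the Hilbert-polynomial decomposition of a Hilbert scheme, and that mechanism requires a relatively ample line bundle, i.e.\ (quasi-)projectivity. The lemma is stated for arbitrary finite type $S$-schemes, and in the paper it is applied to smooth \emph{proper} schemes that need not be projective (the whole point of the main theorem is the proper case). For proper non-projective $X,Y$ the Isom functor is only known to be an algebraic space locally of finite presentation over $S$; there is no Hilbert polynomial, no natural decomposition into countably many quasi-compact pieces (a locally noetherian space can have uncountably many connected components), and a $B$-point of an algebraic space need not factor through any finite type scheme \'etale over it. For non-proper $X,Y$ the compactification step is worse than a technicality: the closure of the graph of an isomorphism $X_T\iso Y_T$ in $(\overline X\times_S\overline Y)_T$ is not flat over $T$ and does not commute with base change, so you do not even get a natural transformation from $\operatorname{Isom}_S(X,Y)$ to the Hilbert functor, and the condition ``the restriction to the open part is the graph of an isomorphism'' is not a locally closed condition (openness of the isomorphism locus is a statement about proper morphisms). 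So the representability you need is not established by the proposal, and in the stated generality it is doubtful.

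The paper avoids representability of Isom functors entirely, and its countability comes from a different source: there are only countably many isomorphism classes of pairs (finite type scheme over a finite type $\integ$-scheme, isomorphism datum) --- this is \cite[\href{https://stacks.math.columbia.edu/tag/0G0U}{Tag 0G0U}]{stacks-project}. One first spreads out the diagram $X\to S\leftarrow Y$ to a model $X_A\to S_A\leftarrow Y_A$ over a finite type $\integ$-scheme $A$ (Section \ref{secAppMod}), takes the countable family of all pairs $(T_i\to S_A,\ (X_A)_{T_i}\iso (Y_A)_{T_i})$ with $T_i$ of finite type over $S_A$, and base changes it along $B\to A$ to get the $S_i$. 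Condition (2) is then checked by spreading out the given section $s$ and isomorphism $X_s\iso Y_s$ to a model over some finite type $\integ$-scheme $B_0$ compatible with $A$, observing that the resulting $B_0\to S_A$ occurs (up to isomorphism) in the countable family, and chasing fiber-product diagrams. If you want to rescue your approach, this ``models over $\integ$'' countability is the ingredient to substitute for the Hilbert polynomial; as written, your proof only covers the case where $X$ and $Y$ are projective over $S$.
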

\begin{proof}
We first apply the discussions of the previous section to the diagram \[X\longrightarrow S\longleftarrow Y\] over $B$, so that we find a finite type $\integ$-scheme $A$, a morphism $B\longrightarrow A$ and a model \[X_A\longrightarrow S_A\longleftarrow Y_A\] 
of the diagram above $A$. Note that all the schemes appearing in the model diagram are finite type $\integ$-schemes.
By \cite[\href{https://stacks.math.columbia.edu/tag/0G0U}{Tag 0G0U}]{stacks-project},
   the family 
   \[\{\phi_i:T_i\longrightarrow S_A, h_i:(X_A)_{T_i}\iso (Y_A)_{T_i}\}_i\]
   of all the pairs (up to isomorphism), where $T_i$ is a finite type $S_A$-schemes and $h_i$ is a $T_i$-linear isomorphism, is countable. We will show that the base-changed family 
   \[\{\psi_i:S_i\defeq
    T_i\times_{A}B\longrightarrow S\}_i\]
    has the second desired property, as the first one is clear.
    
    Choose any section $s:B\longrightarrow S$ with an isomorphism $h:X_s\iso Y_s$. Then, the argument from the last section shows that we can find a model for the section as well as the isomorphism. 
    This means that we have a finite type $\integ$-scheme $B_0$ with a morphism $B\longrightarrow B_0$, a $B_0$-model $X_0\longrightarrow S_0\longleftarrow Y_0$ for the diagram $X\longrightarrow S\longleftarrow Y$, and $B_0$-models $s_0:B_0\longrightarrow S_0, h_0:(X_0)_{s_0}\iso (Y_0)_{s_0}$ for the section $s$ and the isomorphism $h_0.$
    Since $A$ and $B_0$ are taken from the same directed set as in \cite[\href{https://stacks.math.columbia.edu/tag/07SU}{Tag 07SU}]{stacks-project}, up to enlarging $B_0$, we may assume that the morphisms $B\longrightarrow B_0, B\longrightarrow A$ factor through $ B_0\longrightarrow A$. Since $S_A$ and $S_0$ are models of $S$, up to enlarging $B_0$ again, we may assume $(S_A)_{B_0}\iso S_0$. We ensure the same compatibilities for the models of $X,Y$ and models of morphisms as well. 
    Now, consider the map $s_0': B_0\longrightarrow S_0\iso (S_A)_{B_0}\longrightarrow S_A$. Then, we have 
    \[
    (X_A)_{s_0'}\iso (X_0)_{s_0}\iso (Y_0)_{s_0}\iso (Y_A)_{s_0'},
    \] so $s_0': B_0\longrightarrow S_A$ appears in the family $\{\phi_i:T_i\longrightarrow S_A\}_i$. Let $i$ be an index such that $s_0'$ coincides with the map $\phi_i:T_i\longrightarrow S_A$ from the family.
    We claim that $s:B\longrightarrow S$ factors through $\psi_i: B_0\times_{A}B\longrightarrow S$.
    More precisely, let $f:B\longrightarrow B_0\times_{A}B$ be the dotted arrow as in the diagram defined by the universal property of fiber product:
    \[\tag{*}\label{*}
    \xymatrix{
B\ar@{-->}[dr]_f\ar@{=}[drr]\ar[ddr]\\
&B_0\times_A{B}\ar[r]\ar[d]&B\ar[d]\\
&B_0\ar[r]&A    
    }
    \]
    and we claim that $\psi_i\circ f=s.$
    We consider the following commutative diagram where the squares are cartesian:
    \[\tag{**}\label{**}
    \xymatrix{
B\ar@{-->}[dr]\ar@{.>}[drr]\ar[ddr]_q\ar@{=}[rrrd]\\
&B_0\times_A{B}\ar[r]_-{\psi_i}\ar[d]&S\ar[r]_t\ar[d]^{v\circ u}&B\ar[d]\\
&B_0\ar_{s_0'=\phi_i}[r]&S_A\ar[r]_r&A    
    }
    \]
    Here, the dotted arrow and the thinly dotted arrows are defined by universality of fiber product (see the diagram below for the meaning of the label $v\circ u$). 
    It suffices to show that the dotted arrow is $f$ and that the thinly dotted arrow is $s$.
    For this, we consider the following commutative diagram with cartesian squares:
    \[\tag{***}\label{***}
    \xymatrix{
B\ar[r]^s\ar[d]^q&S\ar[r]^t\ar[d]^u&B\ar[d]\\
B_0\ar[r]^{s_0}\ar[dr]_{\phi_i}&S_0\ar[r]\ar[d]^v&B_0\ar[d] \\
&S_A\ar[r]^r&A
    }
    \]
    Moreover, the horizontal compositions in this diagram are the identity maps. 
    To see that the dotted arrow in (\ref{**}) is $f$, we just need to show that the big cartesian square in (\ref{**}) coincides with the cartesian square in (\ref{*}). This is easy in view of the diagram (\ref{***}) as it shows that $r\circ\phi_i$ is the same as the composition of the ``transition map'' $B_0\longrightarrow A$ with $\id_{B_0}$. 
To see that the thinly dotted arrow is $s$, it suffices to show that $s$ fits in place of the thinly dotted arrow, making the diagram commute when we delete $B_0\times_AB$ and adjacent arrows from the diagram. Now, the main thing to check is $(v\circ u)\circ s=\phi_i\circ q$, but it's clear from the diagram (\ref{***}). Hence, we see the factorization $\psi_i\circ f=s$ from the diagram (\ref{**}).
\end{proof}
\begin{Lemma}[see {\cite[\href{https://stacks.math.columbia.edu/tag/0G0R}{Tag 0G0R}]{stacks-project}}]\label{keyLemma}
   Let $S$ be a finite type $B$-scheme and let $Y$ be a smooth proper $S$-scheme. Assume that we have a section $s:B\longrightarrow S$ (which is a locally closed immersion). 
    We let $Y_s\defeq Y\times_{S,s}B$ be the fiber above $s$ (which is a locally closed subscheme of $Y$), and we define the smooth proper $S$-scheme $X\defeq Y_s\times_BS$.
    Suppose $K\in\DperfAbs(X\times_SY)$ is a kernel defining a relative Fourier-Mukai equivalence over $S$, such that we have $K|_{(X\times_SY)_s}\iso\OCal_{\Delta_{Y_s}}$ on $(X\times_SY)_s=Y_s\times_BY_s$.
    Then, there is an open $U\subset S$ containing the image of $s$ such that $Y|_U\iso Y_s\times_B U$.
\end{Lemma}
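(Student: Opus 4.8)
The plan is to show that near the fiber over $s$ the kernel $K$ is, up to a line bundle twist, the structure sheaf of the graph of an isomorphism between $X$ and $Y$, which then yields the trivialization directly. Write $p\colon X\times_S Y\to X$ and $q\colon X\times_S Y\to Y$ for the projections; both are smooth and proper since $X$ and $Y$ are smooth proper over $S$. Over $s$ the complex $K$ restricts to $\OCal_{\Delta_{Y_s}}$, a single sheaf in degree $0$ whose support $\Delta_{Y_s}$ maps isomorphically to $X_s=Y_s$ under $p$ and which is flat of fiberwise length one over $X_s$ along $p_s$. The three features to propagate to a neighborhood are: being a sheaf, being flat over $X$, and having support equal to the graph of an isomorphism.

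First I would show that on an open neighborhood $W$ of $(X\times_S Y)_s$ the complex $K$ is quasi-isomorphic to a single coherent sheaf $\FCal$ which is flat over $X$ along $p$. Since $K$ is perfect and its derived restriction to the fiber $(X\times_S Y)_s$ (the base change of the locally closed immersion $s$) is concentrated in degree $0$, I would run a cohomology-and-base-change argument on the cohomology sheaves $\mathcal{H}^i(K)$, using the semicontinuity theorems that become available after the reduction to finite type $\integ$-schemes of \S\ref{secAppMod}, to kill all $\mathcal{H}^i(K)$ with $i\neq 0$ in a neighborhood, and to deduce flatness of $\FCal\defeq\mathcal{H}^0(K)$ over $X$ from the fiberwise flatness of $\OCal_{\Delta_{Y_s}}$ via the local criterion of flatness. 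This is the step I expect to be the main obstacle: controlling the derived pullback of the perfect complex $K$ so that no higher or lower cohomology survives near the fiber, and extracting genuine $X$-flatness rather than merely fiberwise data, for which one must also ensure the requisite flatness of $K$ over $S$.

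Granting this, $\FCal$ is flat over $X$ with $\FCal|_{(X\times_S Y)_s}\iso\OCal_{\Delta_{Y_s}}$, so its fibers along $p$ over points of $X_s$ are structure sheaves of single reduced points of $Y$, of length one. By flatness the fiber length is locally constant, hence equal to one on an open $V\subset X$ containing $X_s$; equivalently $p_*\FCal$ is invertible there and $\rest{p}{\supp{\FCal}}$ is finite of degree one over $V$. A flat family of length-one subschemes of $Y/S$ is the same as a morphism to $\mathrm{Hilb}^1_{Y/S}=Y$, so $\supp{\FCal}$ is the graph $\Gamma_\phi$ of an $S$-morphism $\phi\colon V\to Y$ near $X_s$, and $\phi$ restricts over $s$ to the identity $Y_s\to Y_s$. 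Because $X\to S$ is proper I may replace $V$ by $X|_{U}$ for an open $U\subset S$ containing the image of $s$, so that $\phi$ is an $S$-morphism $X|_U\to Y|_U$ that is an isomorphism on the fiber over $s$.

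Finally, $\phi\colon X|_U\to Y|_U$ is a morphism of smooth proper $S$-schemes which is an isomorphism over $s$. Since the locus of points of the base over which a morphism of flat proper families is an isomorphism is open, I may shrink $U$ once more so that $\phi$ is a fiberwise isomorphism over all of $U$; flatness of $X|_U$ and $Y|_U$ over $U$ then upgrades this to an isomorphism $\phi\colon X|_U\xrightarrow{\sim} Y|_U$. As $X=Y_s\times_B S$, this gives $Y|_U\iso X|_U=Y_s\times_B U$, as required. The substantive content is concentrated in the first two steps; once the support of $K$ is identified with the graph of a fiberwise isomorphism, the conclusion is formal.
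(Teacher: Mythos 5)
Your proposal is correct in outline, but it takes a genuinely different route from the paper's proof. The paper follows the Stacks Project argument (Tag 0G0R) closely: at each closed point $z$ of each closed fiber it chooses a regular sequence cutting out the diagonal $\Delta_{Y_c}$, lifts it to a regular sequence on $Z\defeq X\times_SY$ whose vanishing locus $Z(z)$ is flat over $S$ and has structure sheaf isomorphic to $K$ near $z$ (Tags 0G0N, 0G0P), glues the $Z(z)$ into a closed subscheme $\widetilde{Z}$ of an open $\widetilde{U}\subset Z$, uses properness of $Z\to S$ plus a quasi-compactness argument to see that the section lands in the open locus where $\widetilde{U}$ contains the whole fiber, and finally applies Tag 0G0Q to the two proper maps $\widetilde{Z}\to X$, $\widetilde{Z}\to Y$, which are isomorphisms on fibers over the section. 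Note that the paper only ever needs flatness of $\widetilde{Z}$ over $S$, never over $X$, and never identifies $K$ as a sheaf globally. Your route is the classical Fourier--Mukai argument that a kernel with point-like derived fibers is the twisted graph of a morphism, and it does work here: since $\OCal_{\Delta_{Y_s}}$ is flat over $B$, the derived fibers of $K$ at points $t$ in the image of $s$ are the sheaves $\OCal_{\Delta_{Y_c}}$, so the derived local criterion of flatness makes $K$ a single coherent sheaf $\FCal$ flat over $S$ near $(X\times_SY)_s$; then the fibrewise flatness of $\OCal_{\Delta_{Y_c}}$ over $X_c$ plus the crit\`ere de platitude par fibres upgrades this to flatness over $X$, and openness of the flat locus gives the neighborhood, exactly as you indicate. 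What your approach buys is a more conceptual endgame (the support of $\FCal$ is literally the graph of a morphism $\phi$ restricting to the identity over $s$); what it costs is more machinery (derived local criterion, fibrewise flatness criterion, cohomology and base change to see $p_*\FCal$ invertible). Both proofs converge at the end: your ``openness of the isomorphism locus'' is precisely the paper's appeal to Tag 0G0Q.

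Three details need repair when you write this up. First, before asserting that the fibre length is locally constant you must shrink the base of the family: using properness of $X\times_SY\to X$, pass to an open $V\subset X$ containing $X_s$ over which all of $\supp{K}$ lies inside the neighborhood $W$ where $K\iso\FCal$ (the image of the closed set $\supp{K}\setminus W$ is closed in $X$ and misses $X_s$); otherwise components of the support escaping $W$ would invalidate the length count. Second, since $Y\to S$ is only proper, not projective, you should not invoke $\mathrm{Hilb}^1_{Y/S}$ as a scheme; replace it by the direct argument that $\supp{\FCal}$, with its annihilator scheme structure, is finite over $V$ and acts faithfully on the line bundle $p_*\FCal$, hence embeds into $\mathcal{E}nd_{\OCal_V}(p_*\FCal)=\OCal_V$ and maps isomorphically to $V$ --- this is exactly the graph statement you want. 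Third, the remark that semicontinuity ``becomes available after the reduction to finite type $\integ$-schemes of \S\ref{secAppMod}'' is off: no such reduction is used or needed in this lemma, since all the flatness and openness statements hold directly over the noetherian base.
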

\begin{proof}
We write $Z\defeq X\times_SY$.
We note that the section $s:B\longrightarrow S$ is only a locally closed immersion as $S$ may not be separated over $V$, but we know that $\Delta_{Y_s}:Y_s\longrightarrow (X\times_SY)_s=Y_s\times_BY_s$ is a closed immersion as $Y_s$ is separated over $B$. For each closed point $c\in B$, we write $Z_c$ to denote the fiber of $Z_s\longrightarrow B$ above $c$ (we will similarly use the notation $Y_c$). We observe that the composition $\{c\}\inj B\longrightarrow S$ is a closed immersion since it factors as $\{c\}\inj S_c\inj S$.
With this notation, we have $\supp{K}\cap Z_c=\supp{K|_{Z_c}}$. 

Let $z\in Z_c$ be any closed point. 

If $z\not\in \Image(\Delta_{Y_c})\subset Z_c$, using that $K|_{Z_s}\iso\OCal_{\Delta_{Y_s}}$, we see $z\not\in\supp{K}\subset Z$. We then define the open neighborhood $U(z)\defeq Z\setminus\supp{K}\subset Z$ of $z$, which has the property $K|_{U(z)}=0$.
We also set $Z(z)=\emptyset.$

If $z\in \Image(\Delta_{Y_c})\subset Z_c$, we do the following. 
Since $Y_c$ is smooth over $\{c\}$, the map $\Delta_{Y_c}$ is a regular immersion. 
Let $\overline{f_1},..,\overline{f_r}\in\OCal_{Z_c,z}$ be a regular sequence cutting out the ideal sheaf of the closed subscheme $\Delta_{Y_c}\subset Z_c$. Then, the Koszul complex of the above sequence represents the complex $K\tens_{\OCal_{Z,z}}^L\OCal_{Z_c,z}$. By \cite[\href{https://stacks.math.columbia.edu/tag/0G0N}{Tag 0G0N}]{stacks-project}, we can lift the $\overline{f_i}$'s to a regular sequence $f_i$'s in $\OCal_Z,z$ such that $\OCal_{Z,z}/(f_1,..,f_r)$ is flat over $\OCal_{S,c}$ (where $c$ means the image of $\{c\}\inj B\longrightarrow S$). By spreading out (\cite[\href{https://stacks.math.columbia.edu/tag/0G0P}{Tag 0G0P}]{stacks-project}), we can lift $f_i$'s to sections in an affine open $U(z)\subset Z$ with similar properties so that the closed subscheme $Z(z)\defeq V(f_1,..,f_r)\subset U(z)$ has the following properties:
\begin{enumerate}
    \item $Z(z)\inj U(z)$ is a regular closed immersion
    \item $\OCal_{Z(z)}\iso K|_{U(z)}$ over $U(z)$
    \item $Z(z)\longrightarrow S$ is flat
    \item $Z(z)_c=\Delta_{Y_c}\cap U(z)_s$ as closed subschemes of $U(z)_s.$
\end{enumerate}
If we choose another closed point $c'\in B$ and a closed point $z'\in Z_{c'}$, we have 
$Z(z)\cap U(z')=Z(z')\cap U(z)$ as closed subschemes of $U(z)\cap U(z')$ in view of property (2) above (note that this works even when $z'\not\in\Image(\Delta_{Y_{c'}})$ since we will then have $U(z')\cap Z(z)=\emptyset$).
This lets us glue various $Z(z)$'s to a closed subscheme $\widetilde{Z}\subset\widetilde{U}$ of $ \widetilde{U}\defeq \bigcup_{c\in B^\circ,z\in Z_c^\circ}U(z)\subset Z$, where $B^\circ$ (resp.\ $Z_c^\circ$) is the set of closed points of $B$ (resp.\ $Z_c^\circ$).
Letting $\pi:Z\longrightarrow S$ be the structure map, we claim that the section $s:B\longrightarrow S$ lands in $V\defeq S\setminus\pi(Z\setminus\widetilde{U})$. Here, $V$ is open since $\pi$ is proper. By construction, $\widetilde{U}
$ contains every closed point of the variety $Z_c$ for each closed point $c\in B$. It follows that the open $\widetilde{U}\cap Z_c\subset Z_c$ must be all of $Z_c$, i.e.\ $Z_c\subset \widetilde{U}$. 
This means that the open $s\inv(V)\subset B$ contains every closed point $c\in B$. Then, since $B\setminus s\inv(V)$ is quasi-compact, it must be empty. This shows that the section $s$ lands in $V$.
We replace $S$ by $V$ and accordingly base-change relevant scheme over $S$. This ensures in particular that $\widetilde{Z}\inj Z$ is a closed immersion, so that we have proper maps $\alpha:\widetilde{Z}\inj Z\longrightarrow X$, $\beta:\widetilde{Z}\inj Z\longrightarrow Y$.  
For each closed point $c\in B$, these maps induce isomorphisms
$\widetilde{Z}_c\iso X_s$, $\widetilde{Z}_c\iso Y_s$, so \cite[\href{https://stacks.math.columbia.edu/tag/0G0Q}{Tag 0G0Q}]{stacks-project} lets us find open neighborhoods $U^{(c)}\subset S$ of $c$ such that $\alpha,\beta$ induce isomorphisms $\alpha|_{U^{(c)}}:(\widetilde{Z})_{U^{c}}\iso X_{U^{c}},\beta|_{U^{(c)}}:(\widetilde{Z})_{U^{c}}\iso Y_{U^{c}}$.
The open $U\defeq\cup_{c\in B^\circ}U^{(c)}\subset S$ contains the image of $s$ and has the property that $X_U\iso (\widetilde{Z})_{U}\iso Y_U$, so we are done.
\end{proof}
\begin{Prop}[see {\cite[\href{https://stacks.math.columbia.edu/tag/0G0S}{Tag 0G0S}]{stacks-project}}]\label{keyProp}
    Let $S$ be a finite type $B$-scheme, and let $Y\longrightarrow S$ and $P\longrightarrow B$ be smooth proper morphisms. Set $X=P\times_BS$. Let $K\in\DperfAbs(X\times_SY)$ define an $S$-relative Fourier-Mukai equivalence between $X$ and $Y$. If $s:B\longrightarrow S$ is a section, there is an open neighborhood $U\subset S$ of the image of $s$ such that $Y|_U\iso Y_s\times_BU$.
\end{Prop}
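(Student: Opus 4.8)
The plan is to deduce Proposition \ref{keyProp} from Lemma \ref{keyLemma} by untwisting the kernel $K$ so that it restricts to the structure sheaf of a diagonal over $s$, at the cost of replacing $X=P\times_BS$ by the constant family $X'\defeq Y_s\times_BS$. Since the conclusion only involves $Y$ and $Y_s$, it is harmless to change $X$ in this way.

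First I would restrict to the fiber over $s$. As $X_s=(P\times_BS)\times_{S,s}B\iso P$, the complex $K_s\defeq K|_{(X\times_SY)_s}\in\DperfAbs(P\times_BY_s)$ defines a $B$-relative Fourier–Mukai equivalence between $P$ and $Y_s$; indeed, for each point $c\in B$ its fiber $(K_s)_c=K_{s(c)}$ is a derived equivalence because $K$ is an $S$-relative equivalence. Writing $\Phi_{K_s}$ for the associated transform, I would take $M\in\DperfAbs(Y_s\times_BP)$ to be the kernel of the inverse equivalence $\Phi_{K_s}^{-1}$; such a kernel exists, is perfect, and is fiberwise inverse to $K_s$ by the kernel calculus for smooth proper morphisms (relative adjunction and Serre duality), cf.\ \cite[\S2]{Kurama}.

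Next I would transport $M$ back over $S$. Pulling $M$ back along the projection $(Y_s\times_BP)\times_BS\to Y_s\times_BP$ and using the identification $X'\times_SX\iso(Y_s\times_BP)\times_BS$ yields a kernel $\widetilde M\in\DperfAbs(X'\times_SX)$ whose fiber over $t\in S$ above $c\in B$ is $M_c$, and which therefore defines an $S$-relative Fourier–Mukai equivalence. Convolving with $K$ gives $K'\in\DperfAbs(X'\times_SY)$, again perfect and fiberwise a composite of equivalences, hence an $S$-relative Fourier–Mukai equivalence between $X'=Y_s\times_BS$ and $Y$. Since $\widetilde M$ restricts to $M$ over $s$, since $K$ restricts to $K_s$, and since convolution of kernels commutes with restriction to the fiber over $s$, I would compute $K'|_{(X'\times_SY)_s}$ as the kernel of $\Phi_{K_s}\circ\Phi_{K_s}^{-1}$, which is $\OCal_{\Delta_{Y_s}}$ on $(X'\times_SY)_s=Y_s\times_BY_s$.

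The triple $(X',Y,K')$ now satisfies exactly the hypotheses of Lemma \ref{keyLemma}, so that lemma produces an open $U\subset S$ containing the image of $s$ with $Y|_U\iso Y_s\times_BU$, as desired. The main obstacle is confined to the middle step: one must know that the inverse of a relative Fourier–Mukai equivalence is given by a perfect kernel, that the convolution $K'$ is perfect, and that forming the convolution commutes with restriction to the \emph{locally closed} fiber over $s$. These facts are standard for smooth proper morphisms but deserve care since $S$ need not be separated over $B$; I would invoke the relative kernel formalism of \cite[\S2]{Kurama}, or else argue directly via cohomology and base change together with relative duality.
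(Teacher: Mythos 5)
Your proposal is correct and follows essentially the same route as the paper: untwist $K$ by a constant family built from an inverse kernel so that the new kernel restricts to $\OCal_{\Delta_{Y_s}}$ over $s$, then apply Lemma \ref{keyLemma} with $X$ replaced by $Y_s\times_BS$. The only cosmetic difference is order of operations — the paper inverts $K$ over $S$ (via Proposition/Definition 2.5 of \cite{Kurama}) and then restricts to the fiber over $s$, whereas you restrict first and invert the fiber kernel over $B$ — which amounts to the same thing.
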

\begin{proof}
Let $K'$ be the kernel for the inverse transform which exists by the argument of Proposition/Definition 2.5 \cite{Kurama}.  
    $K'|_{(Y\times_SX)_s}$ defines a $B$-relative Fourier-Mukai equivalence $\Dbcoh(Y_s)\iso \Dbcoh(X_s)$, and we can use it to construct a trivial family of Fourier-Mukai equivalences from $Y_s\times_BS$ to $X_s\times_BS$ over $S$. Up to composing $K$ with this trivial family, we may replace $X_s\times_BS$ by $Y_s\times_BS$. This way, we assume $P=Y_s$. We are done by Lemma \ref{keyLemma}.
\end{proof}

\begin{proof}[Proof of Theorem \ref{mainTh}]
By \cite[\href{https://stacks.math.columbia.edu/tag/0G0U}{Tag 0G0U}]{stacks-project}, the family of collections (up to isomorphism)
\[\mathfrak{F}_0=\{(X_i\longrightarrow S_i\longleftarrow Y_i,E_i\in\DperfAbs(X_i\times_{S_i}Y_i))\}_i\]
 which parameterizes all the instances of ``relative Fourier-Mukai equivalences between smooth proper schemes $X_i$ and $Y_i$ over finite type $\integ$-schemes $S_i$'' is countable.
    We let 
    \[
    \mathfrak{F}=
    \{(X_i'\longrightarrow S_i'\longleftarrow Y_i',E_i'\in\DperfAbs(X_i'\times_{S_i'}Y_i'))\}_i
    \]
    denote the family obtained by base-changing the above family along $B\longrightarrow\Spec\integ$.

    We claim that given any smooth proper $B$-relative Fourier-Mukai partners $X,Y$ and a particular kernel $K$ giving the equivalence, there is some $B$-point $s:B\longrightarrow S_i'$ such that $(X\longrightarrow B\longleftarrow Y,K)$ arises by pulling back a member of $\mathfrak{F}$ via $s$ (see also \cite[\href{https://stacks.math.columbia.edu/tag/0G0Y}{Tag 0G0Y}]{stacks-project}).
    First, as in the previous section, 
    we find a model $(X_A\longrightarrow A\longleftarrow Y_A)$ of $(X\longrightarrow B\longleftarrow Y)$ over some finite type $\integ$-scheme $A$.
    We can also find a model $K_A\in\DperfAbs(X_A\times_{A}Y_A)$ for the kernel $K$ (in a way that $K_A$ defines a relative Fourier-Mukai equivalence from $X_A$ to $Y_A$ over $A$) as well by \cite[\href{https://stacks.math.columbia.edu/tag/0G0L}{Tag 0G0L}]{stacks-project}. 
    By construction of $\mathfrak{F}_0$, $(X_A\longrightarrow A\longleftarrow Y_A, K_A)$ appears in the family $\mathfrak{F}_0$, so we write $A=S_i$ for some index $i$.
    The corresponding scheme in the base-changed family $\mathfrak{F}$ is $S_i'=A\times_\integ B$. Since the base change of $K_A=E_i$ along $B\longrightarrow A$ is $K$, the claim is justified once we show that the map $B\longrightarrow A$ factors through $S_i'$, but universality of fiber product gives us the factorization as follows:
    \[
    \xymatrix{
B\ar[dr]\ar[ddr]\ar@{=}[drr]\\
    &S_i'\ar[r]\ar[d]&B\ar[d]\\
    &A=S_i\ar[r]&\Spec\integ
    }
    \]

    Now, we fix a smooth proper $B$-scheme $P$, and we count its $B$-relative Fourier-Mukai partners. By the claim above, 
    we only need to consider the schemes of form $(Y_i')_s$ for various sections $s:B\longrightarrow S_i'$ for various indices $i$ such that $(X_i')_s\iso P.$ By the countability of the family $\mathfrak{F}$, we only need to consider a particular index $i$.
    We thereby pick one collection from our family and for notational simplicity rewrite it as $(X\longrightarrow S\longleftarrow Y,E)$.
    We apply Lemma \ref{LemIsom} to the two $S$-schemes
    $P\times_BS, X$
and find 
 a countable family $\{S_i\longrightarrow
     S\}_i$ as in the lemma. 
     Any section $s:B\longrightarrow S$ with $X_s\iso P$ factors through one of the $S_i$'s, and this family is countable, so we may replace the base $S$ by one of the $S_i$. This way, we assume that $X$ is the trivial family $X= P\times_BS$.
     
     We claim that there are only finitely many 
     $Y_s$'s (up to isomorphism) which arise by pulling back $Y$ along sections $s:B\longrightarrow S$.
     Indeed, if $s$ is one such section, Proposition \ref{keyProp} says that there is some open neighborhood $U(s)\subset S$ of the image of $s$ over which $Y$ is a trivial family. Suppose there are infinitely many non-isomorphic $Y_s$'s, which we call $\{Y_{s_i}\}_i$. Then, we have infinitely many nonempty disjoint opens $U{(s_i)}$ of $S$ (they are disjoint because otherwise some $Y_{s_i}$'s will be isomorphic to each other).
     However, the union of all these opens is an open subscheme in $S$ which is then quasicompact, as $S$ is noetherian.
     It follows that there are only finitely many indices $i$, which is a contradiction.
\end{proof}
\bibliographystyle{alpha}
\bibliography{bib}

\end{document}